\documentclass[11pt,letterpaper,reqno]{amsart}
\newcommand{\ppar}[1]{\par\addvspace{\medskipamount}\noindent{\bfseries #1.}\enspace\ignorespaces}
\usepackage{amsmath,amssymb,amsthm,amsfonts,mathtools}
\usepackage{mathrsfs}
\usepackage{xcolor}
\usepackage[T1]{fontenc}
\usepackage{microtype}
\usepackage{aliascnt}

\usepackage[colorlinks=true,
  linkcolor=blue,
  citecolor=blue,
  urlcolor=blue]{hyperref}
\usepackage[nameinlink,capitalize,noabbrev]{cleveref}

\newtheorem{theorem}{Theorem}[section]

\newtheorem{corollary}[theorem]{Corollary}

\newtheorem{conjecture}[theorem]{Conjecture}
\theoremstyle{definition}

\crefname{theoremletter}{Theorem}{Theorems}
\Crefname{theoremletter}{Theorem}{Theorems}

\newcommand{\R}{\mathbb R}
\newcommand{\tr}{\operatorname{tr}}
\newcommand{\ip}[2]{\left\langle #1,#2\right\rangle}
\newcommand{\norm}[1]{\left\lVert #1\right\rVert}
\newcommand{\ones}{\mathbf 1}

\begin{document}

\title{The positive and negative square-energy conjecture}

\author[Y.~Liu]{Yinchen Liu}
\address{Institute for Interdisciplinary Information Sciences, Tsinghua University, Beijing 100084, P. R. China}
\email{liuyinch23@mails.tsinghua.edu.cn}

\author[Q.~Tang]{Quanyu Tang}
\address{School of Mathematical Sciences, University of Science and Technology of China, Hefei 230026, P. R. China}
\email{tangquanyu827@gmail.com}

\author[S.~Zhang]{Shengtong Zhang}
\address{Department of Mathematics, Stanford University, Stanford, CA 94305, USA}
\email{stzh1555@stanford.edu}

\subjclass[2020]{Primary 05C50; Secondary 15A18, 15A42}

\keywords{positive square energy, negative square energy,
doubly nonnegative matrix, semidefinite programming}

\begin{abstract}
Let $s^+(G)$ and $s^-(G)$ denote the sums of the squares of the positive and negative adjacency eigenvalues of a graph $G$, respectively. We prove the conjecture of Elphick, Farber, Goldberg, and Wocjan that every connected graph $G$ on $n$ vertices satisfies
$$
    \min\{s^+(G),s^-(G)\}\ge n-1.
$$
The proof introduces a new framework for square-energy estimates, in which the Hadamard squares of positive semidefinite matrices that encode these spectral quantities are relaxed to the full doubly nonnegative cone.
\end{abstract}

\maketitle

\section{Introduction}

All graphs in this paper are finite, simple, and undirected.  Let
$G=(V,E)$ be a graph with $n=|V|$ vertices, $m=|E|$ edges, and adjacency
matrix $A=A(G)$, and let $\lambda_1,\ldots,\lambda_n$ be the eigenvalues
of $A$, counted with multiplicity.  The \emph{positive square energy}
and \emph{negative square energy} of $G$ are defined by
\[
    s^+(G):=\sum_{\lambda_i>0}\lambda_i^2,
    \qquad
    s^-(G):=\sum_{\lambda_i<0}\lambda_i^2.
\]
These invariants are quadratic analogues of the classical graph
energy~\cite{LSG}, and they arise naturally in spectral bounds on the
chromatic number: confirming a conjecture of Wocjan and
Elphick~\cite{WE}, Ando and Lin~\cite{AL} proved that every graph
with at least one edge satisfies
\[
    \chi(G)\ge1+\max\biggl\{\frac{s^+(G)}{s^-(G)},
    \frac{s^-(G)}{s^+(G)}\biggr\}.
\]

Here and below, $X\succeq0$ means that $X$ is positive semidefinite.
If
\[
    A=A_+-A_-,
    \qquad
    A_+,A_-\succeq0,
    \qquad
    A_+A_-=0
\]
is the decomposition of $A$ into its positive and negative spectral
parts, then
\[
    s^+(G)=\tr(A_+^2),
    \qquad
    s^-(G)=\tr(A_-^2).
\]
Since $\tr(A^2)=2m$, we have
\begin{equation}\label{eq:trace}
    s^+(G)+s^-(G)=2m.
\end{equation}

In 2016, Elphick, Farber, Goldberg, and
Wocjan~\cite[Conjecture~1]{EFGW} proposed the following conjecture.

\begin{conjecture}\label{conj:EFGW}
Every connected graph $G$ on $n$ vertices satisfies
\[
\min\{s^+(G),s^-(G)\}\ge n-1.
\]
\end{conjecture}

This conjecture was later highlighted as Conjecture~1 in the survey
of Liu and Ning~\cite{LiuNing2023}.

By~\eqref{eq:trace}, the conjecture states, equivalently, that every
connected graph satisfies $\max\{s^+,s^-\}\le 2m-n+1$, strengthening
Hong's classical spectral radius bound
$\lambda_1^2\le 2m-n+1$~\cite{Hong}.  The conjectured bound is sharp:
every tree satisfies $s^+(G)=s^-(G)=m=n-1$, and the complete graph
satisfies $s^-(K_n)=n-1$.

Since its formulation, Conjecture~\ref{conj:EFGW} has attracted
considerable attention.  The original paper~\cite{EFGW} established it
for several classes of graphs, including regular and bipartite graphs,
and Abiad, de~Lima, Desai, Guo, Hogben, and Madrid~\cite{AbiadEtAl}
proved it for several additional classes.  Further work has examined
structural properties and the asymmetry between the two square
energies~\cite{EL}, as well as a finer conjecture for unicyclic
graphs~\cite{NZ}.  Substantial progress was also made on the general
case: Zhang~\cite{Zh} established
\[
    \min\{s^+(G),s^-(G)\}\ge n-\gamma(G),
\]
where $\gamma(G)$ denotes the domination number of $G$---in
particular, $\min\{s^+,s^-\}\ge n/2$---while Akbari, Kumar, Mohar,
and Pragada~\cite{AKMP} simultaneously proved the linear
bound
\[
    \min\{s^+(G),s^-(G)\}\ge \frac{3n}{4};
\]
see
also~\cite{AKMPZ} for refinements concerning the positive square
energy, and~\cite{CS,CSZ} for a systematic study of the semidefinite
and conic methods behind these bounds.

Our main result proves Conjecture~\ref{conj:EFGW} in full.

\begin{theorem}\label{thm:main}
Every connected graph $G$ on $n$ vertices satisfies
\[
    \min\{s^+(G),s^-(G)\}\ge n-1.
\]
\end{theorem}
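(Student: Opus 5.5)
By symmetry it suffices to prove $s^-(G)\ge n-1$; for $s^+$ we use $A_+$ in place of $A_-$, with the sign in the edge constraint below reversed. The plan follows the framework announced in the abstract.

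\textbf{Encoding.} Let $P=A_-\succeq 0$, so that $s^-(G)=\tr(P^2)=\sum_{i,j}P_{ij}^2$. Consider the Hadamard square $X=P\circ P$. By the Schur product theorem $X\succeq0$, and $X$ is entrywise nonnegative, so $X$ is doubly nonnegative. Moreover $s^-(G)=\ones^{T}X\ones$. We also record the linear information $P$ inherits from $A$:
\begin{itemize}
\item $P_{ij}=(A_+)_{ij}$ whenever $i\ne j$ and $ij\notin E$;
\item $P_{ij}=(A_+)_{ij}-1$ whenever $ij\in E$;
\item $P_{ii}=(A_+)_{ii}$, which gives $\tr P\ge 0$ and $\sum_i P_{ii}=\sum_i (A_+)_{ii}$.
\end{itemize}
The main quadratic facts come from $A_+A_-=0$. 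They give $(A^2)_{ii}=(A_+^2)_{ii}+(A_-^2)_{ii}=\deg(i)$. For each edge they give $A_{ij}=\sum_k\bigl((A_+)_{ik}(A_+)_{kj}-(A_-)_{ik}(A_-)_{kj}\bigr)$ only after a further decomposition, so I would instead use the cleaner identity $\langle A_-,A\rangle=-s^-$. This yields
\[
\sum_{ij\in E}P_{ij}=-\tfrac12 s^-(G).
\]
Combined with the Cauchy--Schwarz bound $P_{ij}^2\le P_{ii}P_{jj}$ and the degree identity, every edge constraint turns into a linear inequality on the entries of $X$ together with the diagonal data.

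\textbf{Relaxation.} The relaxation replaces $X$ by an arbitrary doubly nonnegative matrix $Y$ satisfying the inequalities derived above. These are:
\begin{itemize}
\item for each vertex $i$, $Y_{ii}+\sum_{j}Y_{ij}\le \deg(i)$, or a variant thereof;
\item for each edge, a coupling $\sqrt{Y_{ij}}\ge$ something, converted to a linear form via $2\sqrt{Y_{ij}}\le tY_{ij}+1/t$.
\end{itemize}
The relaxed problem is to minimize $\ones^TY\ones$ over this set. The goal is to show that the minimum is at least $n-1$ for connected $G$.

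\textbf{Dual certificate.} The key step is to exhibit a dual feasible solution. This means finding weights $w_e\ge0$ on edges and $y_v$ on vertices such that
\[
\ones\ones^T-\sum_e w_eC_e-\sum_v y_vD_v
\]
lies in the dual cone, namely the sum of a PSD matrix and an entrywise nonnegative matrix (the copositive-inner-approximation). The certificate must also satisfy objective value $\ge n-1$.

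The natural choice uses a spanning tree $T$ of $G$: put weight on the $n-1$ tree edges so that each tree edge contributes $1$. This is exactly where connectivity enters, and it matches the equality case of trees. Non-tree edges and non-adjacent pairs are absorbed by the nonnegative part, which is precisely why relaxing to the full DNN cone helps. The PSD part should be a Laplacian-type matrix $L_T$ of the tree, scaled appropriately.

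\textbf{Main obstacle.} I expect the hardest step to be the coupling between an edge and its endpoints' diagonal entries. The value $-1$ on an edge must be split between $A_+$ and $A_-$. A tree edge can therefore contribute less than $1$ to $s^-$ unless we charge the deficit to $s^+$-type diagonal mass, which we can afford only through the constraint $(A_+^2)_{ii}+(A_-^2)_{ii}=\deg(i)$.

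I would first try the per-edge inequality
\[
2\bigl((A_-)_{ij}^2+(A_+)_{ij}^2\bigr)\ge 1,
\]
which follows from $(A_+)_{ij}-(A_-)_{ij}=1$. This should be paired with a symmetrization argument applied to $G$ and its sign-flip. If that loses a factor of $2$, I would orient the tree from a root and assign to each non-root vertex $v$ its parent edge. The aim is then to show the local bound
\[
\sum_j (A_-)_{vj}^2\ \ge\ 1\ \text{(amortized)}
\]
via the DNN dual, with the PSD Laplacian part handling the cross terms.
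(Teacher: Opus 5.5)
You have not yet given a proof: the decisive step, the dual certificate, is only described conditionally (``should be a Laplacian-type matrix'', ``I would first try'', ``if that loses a factor of $2$''). The relaxation you set up cannot supply it.

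If the only variable is $Y=A_-\circ A_-$, then every constraint on your list that involves $Y$ alone is satisfied by $Y=0$. That covers $Y\succeq0$, $Y\ge0$, the row-sum bounds $\sum_jY_{ij}\le\deg(i)$, and the edge inequality $\sum_{ij\in E}\sqrt{Y_{ij}}\ge\frac12\ones^{\top}Y\ones$, which is what $\langle A,A_-\rangle=-s^-$ gives you. So minimizing $\ones^{\top}Y\ones$ over your set returns $0$.

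Any positive lower bound must import $A_+$, for instance through $(A_+)_{ij}-(A_-)_{ij}=1$ on edges. Your coupling ``$\sqrt{Y_{ij}}\ge$ something'' is undefined until it does. The fallback $2((A_-)_{ij}^2+(A_+)_{ij}^2)\ge1$ does not help. It controls only the sum of the two squares, so summed over edges it gives $s^++s^-\ge m$. That is weaker than the identity $s^++s^-=2m$ you already have, and it says nothing about how the mass splits between the signs.

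The spanning-tree charging also runs against the extremal case. For $K_n$ one has $A_-=I-\frac1nJ$ and $s^-=n-1$. Yet the off-diagonal entries of $A_-\circ A_-$ contribute only $\frac{n-1}{n}$ to $\ones^{\top}(A_-\circ A_-)\ones$. So a certificate crediting each tree edge with a unit of $s^-$ must recover essentially everything from diagonal terms, which is exactly the obstacle you flag and leave open.

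The missing idea is to reverse the direction using the trace identity. The bound $s^-\ge n-1$ is equivalent to $s^+\le q(G):=2m-n+1$, and an \emph{upper} bound needs only the edge inequality you already derived, now applied to $A_+$. Take $M=A_+\circ A_+$, which is doubly nonnegative by the Schur product theorem. Then $\sum_{uv\in E}\sqrt{M_{uv}}\ge\frac12 s^+$ and $\ones^{\top}M\ones=s^+$. Everything therefore reduces to the inequality $4(\sum_{uv\in E}\sqrt{M_{uv}})^2\le q(G)\,\ones^{\top}M\ones$ for every connected $G$ and every doubly nonnegative $M$. Applying it to $A_-\circ A_-$ likewise gives $s^-\le q(G)$, hence $s^+\ge n-1$. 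Note that $q(G)$ depends on all edges, not on a spanning tree.

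The paper proves this inequality by induction on $n$:
\begin{itemize}
\item Non-edge entries are folded onto the diagonal by adding $M_{uv}(e_u-e_v)(e_u-e_v)^{\top}$.
\item At a cut vertex, a Gram representation of $M$ splits orthogonally, with $q(G_1)+q(G_2)=q(G)$.
\item When $G$ has no cut vertex, the induction hypothesis averaged over all $G-v$ is combined with the flat bound $4(\sum_{uv\in E}\sqrt{M_{uv}})^2\le 4m\sum_{uv\in E}M_{uv}$. Which of the two is used depends on the ratio of edge mass to diagonal mass.
\end{itemize}
Nothing in your plan plays the role of this inequality.
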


Our proof is based on a general relaxation: since the Hadamard
squares $A_\pm\circ A_\pm$ are doubly nonnegative, bounds on the
square energies follow from inequalities valid over the entire doubly
nonnegative cone.  At the heart of this paper is one such inequality,
for matrices indexed by the vertices of a connected graph
(Theorem~\ref{thm:sparse}), which we believe to be of independent
interest.  

Beyond Conjecture~\ref{conj:EFGW} itself,
Theorem~\ref{thm:main} resolves several related conjectures from the
literature.  More precisely, it proves the disconnected version of the
square-energy conjecture~\cite[Conjecture~2]{EFGW}, Elphick's
conjecture on adjacency inertia recorded in
\cite[Conjecture~7.4]{Zh}, and conjectured lower bounds for positive
and negative $p$-energies from
\cite[Conjecture~2]{AKMPp} and \cite[Conjecture~5.4]{ETZ}.
It also establishes unconditionally a Nordhaus--Gaddum-type inequality
that Elphick and Aouchiche~\cite[Theorem~4]{EA} derived under
Conjecture~\ref{conj:EFGW}.  These consequences are presented in
Section~\ref{sec:consequences}.  In the companion
paper~\cite{LTZTuran}, the same relaxation yields a positive
square-energy strengthening of Tur\'an's theorem; the two proofs are
otherwise independent.

\subsection{Notation}
For real symmetric matrices $X$ and $Y$, write
\[
    \ip XY:=\tr(XY),
    \qquad
    \norm X_F:=\sqrt{\ip XX}.
\]
We use $X\ge0$ for entrywise nonnegativity; a real symmetric matrix
$X$ is \emph{doubly nonnegative} if $X\succeq0$ and $X\ge0$ (see
\cite{BSM} for a textbook treatment).

Let $\ones$ be the all-ones
vector, set $J:=\ones\ones^{\top}$, and write $X\circ Y$ for the
Hadamard product of $X$ and $Y$.  Whenever a graph $G$ is fixed and
$u\in V(G)$, let $e_u$ denote the corresponding standard basis vector;
sums of the form $\sum_{uv\in E(G)}$ run over the edges of $G$, each
edge counted once.

\subsection{Origin of the proof}
The proof given below is short, but its two ingredients---Theorem~\ref{thm:sparse} and the reduction in Section~\ref{sec:main}---would be difficult to motivate in isolation. In this subsection, we give an informal explanation of where
these ingredients come from. Readers interested only in the formal proof
may skip directly to Section~\ref{sec:sparse}.

Our starting point is an observation underlying the semidefinite
method of~\cite{AL,CS,CSZ,Zh}: the square energies are themselves
optimal values of semidefinite programs.  Indeed, for every
$X\succeq0$,
\[
    \ip AX=\ip{A_+}{X}-\ip{A_-}{X}
    \le\ip{A_+}{X}
    \le\norm{A_+}_F\norm X_F
    =\sqrt{s^+(G)}\,\norm X_F,
\]
with equality throughout at $X=A_+$, and therefore
\begin{equation}\label{eq:variational}
    s^+(G)=\max_{0\ne X\succeq0}
    \frac{\max\{\ip AX,\,0\}^2}{\norm X_F^2},
    \qquad
    s^-(G)=\max_{0\ne X\succeq0}
    \frac{\max\{-\ip AX,\,0\}^2}{\norm X_F^2}
\end{equation}
(equivalently, $s^\pm(G)=\min_{M\succeq0}\norm{A\pm M}_F^2$, the form
emphasized in~\cite{Zh}).  Consequently, an upper bound on both square
energies is the same thing as a \emph{certificate}: a constant $\beta$
with
\[
    \ip{A}{X}^{2}\le\beta\,\norm X_F^2
    \qquad\text{for every }X\succeq0.
\]

\ppar{Earlier certificates}
Ando and Lin~\cite{AL}, proving a conjecture of Wocjan and
Elphick~\cite{WE}, obtained the certificate
$\beta=2m\bigl(1-\tfrac1{\chi(G)}\bigr)$, and Coutinho and
Spier~\cite{CS} sharpened it to
$\beta=2m\bigl(1-\tfrac1{\chi_v(G)}\bigr)$, where
$\chi_v(G)\le\chi(G)$ is the vector chromatic number.  Both arguments
use the same mechanism, with a parameter $t\in\{\chi(G),\chi_v(G)\}$.
Applying the Cauchy--Schwarz inequality to the $2m$ nonzero entries of
the adjacency matrix gives
\[
    \ip{A}{X}^{2}
    =
    \Bigl(\sum_{u,v}A_{uv}X_{uv}\Bigr)^2
    \le
    2m\sum_{u,v}A_{uv}X_{uv}^{2}
    =
    2m\,\ip{A}{X\circ X},
\]
and the parameter $t$ enters through an inequality on the doubly
nonnegative cone: the matrix $Z:=X\circ X$ satisfies $Z\ge0$, and also
$Z\succeq0$ by the Schur product theorem~\cite[\S7.5]{HJ}, and every such
doubly nonnegative matrix obeys
\begin{equation}\label{eq:dnn}
    \ip{A}{Z}
    \le
    \left(1-\frac{1}{t}\right)\ip{J}{Z}.
\end{equation}
Since $\ip{J}{X\circ X}=\norm{X}_F^2$, the two inequalities combine
with~\eqref{eq:variational} to give
$s^\pm(G)\le2m\bigl(1-\tfrac1t\bigr)$.  The conic optimization problem
underlying~\eqref{eq:dnn} is studied systematically by Coutinho,
Spier, and Zhang~\cite{CSZ}.

\ppar{The idea behind \eqref{eq:dnn}}
Following the original proofs in~\cite{AL,CS}, the key to~\eqref{eq:dnn}
is the construction of a correlation matrix $C$ (i.e., $C\succeq0$ with
$C_{vv}=1$ for all $v\in V$) such that $C_{uv}\le-\frac1{t-1}$ for
every edge $uv\in E$.  For the parameter $t=\chi(G)$, such a $C$ comes
from a proper coloring made geometric: let
$y_1,y_2,\ldots,y_t\in\R^{t-1}$ be the unit vectors of a regular
simplex centered at the origin, so that
$\ip{y_i}{y_j}=-\frac1{t-1}$ for all $i\ne j$, and set
$c_u:=y_{\text{color of }u}$.  The Gram matrix
$C_{uv}:=\ip{c_u}{c_v}$ is then a correlation matrix with
$C_{uv}=-\frac1{t-1}$ for all adjacent $u$ and $v$, because adjacent
vertices receive distinct colors.  (Relaxing proper colorings to
arbitrary unit vectors with $\ip{c_u}{c_v}\le-\frac1{t-1}$ on edges---a \emph{vector coloring}---is what sharpens the parameter to
$t=\chi_v(G)$~\cite{CS}.)

Under this construction $(J-C)\circ A\ge\tfrac t{t-1}A$, because
$A\ge0$ and $1-C_{uv}\ge\tfrac t{t-1}$ on edges; hence
$\ip{(J-C)\circ A}{Z}\ge\tfrac t{t-1}\ip AZ$ for every $Z\ge0$.  The
claim \eqref{eq:dnn} is then an instance of an identity valid for
\emph{every} correlation matrix $C$:
\begin{equation}\label{eq:conic}
    J-(J-C)\circ A
    \;=\;
    \underbrace{\vphantom{(J-C)}C}_{\succeq\,0}
    \;+\;
    \underbrace{(J-C)\circ(J-A)}_{\ge\,0},
\end{equation}
where $J-C\ge0$ because $C\succeq0$ with unit diagonal: every
$2\times2$ principal minor of $C$ gives $|C_{uv}|\le1$.  The
right-hand side is a sum of a positive semidefinite and a nonnegative
matrix, so pairing with any doubly nonnegative $Z$ yields
\[
    \frac t{t-1}\ip AZ\le\ip{(J-C)\circ A}{Z}\le\ip JZ,
\]
the first inequality being the estimate just noted; this is exactly
\eqref{eq:dnn}.

\ppar{A weighted estimate}
In the language of correlation matrices, the two estimates above
collapse into a single Cauchy--Schwarz step, with weight $1-C_{uv}$ on
the edge $uv$:\footnote{For the coloring matrix above,
$1-C_{uv}=\tfrac t{t-1}$ on every edge, so the weights are uniform;
a general correlation matrix may price each edge differently.}
\[
    \ip{A}{X}^{2}
    =\Bigl(2\sum_{uv\in E}X_{uv}\Bigr)^2
    \le\Phi_G(C)\cdot\ip{(J-C)\circ A}{X\circ X}
    \le\Phi_G(C)\,\norm X_F^2,
\]
where the last inequality uses~\eqref{eq:conic}. For a correlation matrix
$C$, define
\[
    \Phi_G(C):=
    \begin{cases}
        \displaystyle
        \sum_{uv\in E(G)}\frac{2}{1-C_{uv}},
        & C_{uv}<1\text{ for every }uv\in E(G),\\[3mm]
        +\infty,
        & C_{uv}=1\text{ for some }uv\in E(G).
    \end{cases}
\]
Here $C_{uv}\le 1$ for all $u,v$, since $C$ is a correlation matrix.
Every correlation matrix $C$ with $\Phi_G(C)<\infty$ therefore gives a
certificate of value $\Phi_G(C)$, and it is natural to minimize this
quantity. Theorem~\ref{thm:main} would follow from the bound
\[
    \min_{C}\Phi_G(C)\le q(G):=2m-n+1
    \qquad\text{for every connected graph }G.
\]

\ppar{Bounding the optimum by duality}
Minimizing $\Phi_G$ over correlation matrices is a convex problem
that can be cast as a semidefinite program: the variable $C$ ranges
over the convex set
\[
    \mathrm{Corr}_n:=\{C\in\R^{n\times n}:
    C=C^{\top},\ C\succeq0,\ C_{vv}=1\text{ for all }v\},
\]
cut out by linear equations and one semidefinite constraint.  The
well-known first-order conditions for semidefinite programs (see
\cite[\S3]{VB}) reveal what a minimizer $C$ must look like:
complementary slackness produces a doubly nonnegative matrix $R$,
supported off-diagonal on $E(G)$, with
\[
    R_{uv}=\frac1{(1-C_{uv})^2}\quad(uv\in E),
    \qquad
    \sum_{uv\in E}\sqrt{R_{uv}}=\frac{\Phi_G(C)}2,
    \qquad
    \ones^{\top}R\ones=\Phi_G(C).
\]
The two normalization identities suggest forgetting where $R$ came
from and, guided by homogeneity, guessing that
\begin{equation}\label{eq:thm2.1}
        4\Bigl(\sum_{uv\in E}\sqrt{M_{uv}}\Bigr)^2
    \le q(G)\cdot\ones^{\top}M\ones
\end{equation}
for every connected graph $G$ and every doubly nonnegative matrix $M$
supported on the edges.  Taking $M=R$ then gives
$\Phi_G(C)^2\le q(G)\,\Phi_G(C)$, i.e.\ $\Phi_G(C)\le q(G)$, as
desired.  Once formulated, the inequality admits a short inductive
proof: it follows by induction on cut vertices---both $q$ and
Gram representations split there---with the $2$-connected case handled
by averaging the induction hypothesis over all one-vertex deletions.

\ppar{A direct reduction}
In the final write-up the semidefinite program disappears altogether.
A folding trick disposes of the support hypothesis, so that \eqref{eq:thm2.1} (Theorem~\ref{thm:sparse}) holds for all doubly nonnegative matrices,
and applying it to $M=A_\pm\circ A_\pm$ yields Theorem~\ref{thm:main}
directly.  In light of the variational
characterization~\eqref{eq:variational}, this reduction says that
Theorem~\ref{thm:sparse} relaxes Theorem~\ref{thm:main} from the
Hadamard squares $\{X\circ X:X\succeq0\}$, which compute the square
energies exactly, to the full doubly nonnegative cone. The
correlation-matrix program survives only in this overview, as the
account of how the inequality was found.

\subsection{Organization}
The rest of
the paper is organized accordingly: Section~\ref{sec:sparse} proves
Theorem~\ref{thm:sparse}, Section~\ref{sec:main} deduces
Theorem~\ref{thm:main}, and Section~\ref{sec:consequences} collects
consequences of the main theorem.

\ppar{Formal verification}
Theorem~\ref{thm:main} has been formally
verified in Lean~4 using Mathlib.  The complete source code,
documentation, reproducible build instructions, and kernel-axiom audit
are available at
\href{https://github.com/ShengtongZhang-alt/Sq}
{\texttt{github.com/ShengtongZhang-alt/Sq}}.

\section{A doubly nonnegative matrix inequality}\label{sec:sparse}
For a connected graph $G$, set
\[
    q(G):=2|E(G)|-|V(G)|+1;
\]
by~\eqref{eq:trace}, Theorem~\ref{thm:main} is equivalent to the
upper bound $\max\{s^+(G),s^-(G)\}\le q(G)$, which we will deduce in
Section~\ref{sec:main} from the following inequality.

\begin{theorem}\label{thm:sparse}
Let $G$ be a simple connected graph and let $M$ be a doubly nonnegative matrix
indexed by $V(G)$. Then
\[
    4\cdot \Bigl(\sum_{uv\in E(G)}\sqrt{M_{uv}}\Bigr)^{\!2}
    \le q(G)\cdot\ones^{\top}M\ones.
\]
\end{theorem}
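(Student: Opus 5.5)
We prove the inequality by induction on $|V(G)|$, following the strategy sketched in the overview: split at cut vertices, and handle the $2$-connected case by averaging over one-vertex deletions. First we reduce to $M$ supported on $E(G)$ plus the diagonal. This is the folding trick. Entries $M_{uv}$ with $uv\notin E$ contribute nothing to the left-hand side, and they contribute nonnegatively to $\ones^\top M\ones$. We write $M$ as a Gram matrix $M_{uv}=\ip{x_u}{x_v}$ and would like to delete the non-edge entries. Instead of deleting them, we move their mass onto diagonal-type terms. The identity $\ones^\top M\ones=\norm{\sum_u x_u}^2$ shows that non-edge inner products are exactly what we are allowed to drop, provided we keep positive semidefiniteness. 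The simplest alternative is not to reduce at all: we carry the full doubly nonnegative $M$ through the induction, since deleting a vertex $w$ leaves the principal submatrix $M[V\setminus w]$, which is still doubly nonnegative. The plan below does exactly that.

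\textbf{Base case.} For $G=K_1$ we have $q=0$ and the left-hand side is $0$, so the inequality holds. For $G=K_2$ we have $q=2$, and we need $4M_{uv}\le 2(M_{uu}+M_{vv}+2M_{uv})$. This is equivalent to $M_{uu}+M_{vv}\ge 0$, which holds.

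\textbf{Cut vertex.} Suppose $G=G_1\cup G_2$ with $V(G_1)\cap V(G_2)=\{w\}$. Write $S_i=\sum_{uv\in E(G_i)}\sqrt{M_{uv}}$. Then $q(G)=q(G_1)+q(G_2)$, because $2m$ adds and $n-1$ adds. By Cauchy--Schwarz,
\[
4(S_1+S_2)^2\le q(G)\Bigl(\frac{4S_1^2}{q(G_1)}+\frac{4S_2^2}{q(G_2)}\Bigr)\le q(G)\bigl(\ones^\top M_1\ones+\ones^\top M_2\ones\bigr),
\]
where $M_i$ is the principal submatrix of $M$ on $V(G_i)$. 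It then suffices to show
\[
\ones^\top M_1\ones+\ones^\top M_2\ones\le\ones^\top M\ones.
\]
Taking the Gram vectors and setting $a=\sum_{V(G_1)\setminus w}x_u$ and $b=\sum_{V(G_2)\setminus w}x_u$, this inequality reads
\[
\norm{a+x_w}^2+\norm{b+x_w}^2\le\norm{a+b+x_w}^2,
\]
that is, $\norm{x_w}^2+2\ip{a}{x_w}+2\ip{b}{x_w}\le 2\ip ab+\dots$. This does \emph{not} hold in general; the extra $\norm{x_w}^2$ is the problem. So at the cut vertex we must split $x_w$, or rather split the weight of $w$, between the two sides.

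The fix is to strengthen the induction hypothesis to a weighted form. The statement should hold with $\ones$ replaced by a vector $c\ge 0$ whose entries equal $1$ except at a designated vertex. Equivalently, we use the observation that the inequality is homogeneous in each Gram vector after the scaling $x_u\mapsto t_u x_u$. I expect this strengthening to be the main obstacle, namely finding the right auxiliary statement with weights that splits at cut vertices. The first thing to try is the version with vertex weights $c_u>0$:
\[
4\Bigl(\sum_{uv\in E}\sqrt{M_{uv}}\Bigr)^2\le\Bigl(\sum_{uv\in E}\frac{2}{c_uc_v}\cdots\Bigr)\,c^\top Mc.
\]
We would tune this so that the weights on the two sides at $w$ add correctly.

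\textbf{$2$-connected case.} Here every $G-w$ is connected. Apply the induction hypothesis to each $G-w$ with $M[V\setminus w]$ and sum over $w$. An edge $uv$ survives in $n-2$ of the deletions, so
\[
\sum_w S_{G-w}=(n-2)S.
\]
By Cauchy--Schwarz,
\[
(n-2)^2S^2\le\Bigl(\sum_w q(G-w)\Bigr)\sum_w\frac{S_{G-w}^2}{q(G-w)}.
\]
Next, $\sum_w q(G-w)=(n-2)\cdot 2m-n(n-2)$, using $\sum_w (m-\deg w)=(n-2)m$. Also
\[
\sum_w\ones^\top M[V\setminus w]\ones=(n-2)\ones^\top M\ones+\sum_u M_{uu}.
\]
This yields
\[
4S^2\le\frac{(2m-n)}{n-2}\bigl((n-2)\ones^\top M\ones+\tr M\bigr).
\]
The term $(2m-n)$ is smaller than $q$ by $1$. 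It remains to show that the deficit $\ones^\top M\ones$ dominates $\frac{2m-n}{n-2}\tr M$. This fails in general, so the trace term must be absorbed using the diagonal dominance supplied by positive semidefiniteness: $M_{uv}\le\sqrt{M_{uu}M_{vv}}$, which gives $S\le\sum_{uv\in E}\sqrt{M_{uu}M_{vv}}^{1/2}$. One then combines the averaged bound with the trivial bound through a convex combination.

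The delicate points are making these two cases close exactly at the constant $2m-n+1$, and choosing the strengthened (weighted) hypothesis. I would pin them down by testing trees, where equality holds, and $K_n$.
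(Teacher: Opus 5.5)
Your skeleton matches the paper's: induction, additivity $q(G)=q(G_1)+q(G_2)$ at a cut vertex, and averaging over one-vertex deletions otherwise. However, the cut-vertex step is left open, and this is the main missing idea. You correctly see that passing to principal submatrices double-counts $\norm{x_w}^2$, but the weighted strengthening you propose is never specified, and none is needed. The fix is the folding you described and then discarded. Replace $M$ by $M+\sum M_{uv}(e_u-e_v)(e_u-e_v)^{\top}$, summed over unordered non-adjacent pairs $\{u,v\}$. This is still doubly nonnegative, keeps every edge entry, keeps $\ones^{\top}M\ones$ (as $\ones^{\top}(e_u-e_v)=0$), and kills all non-edge entries. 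Then no entry joins $U:=V(G_1)\setminus\{w\}$ to $W:=V(G_2)\setminus\{w\}$, so the Gram vectors of $U$ and of $W$ span orthogonal subspaces $\mathcal U,\mathcal W$. Write $x_w=p_U+p_W+p_0$ with $p_U\in\mathcal U$, $p_W\in\mathcal W$, $p_0\perp\mathcal U+\mathcal W$. Let $M_1$ be the Gram matrix of the $U$-vectors with $p_U+p_0$ at $w$, and $M_2$ that of the $W$-vectors with $p_W$ at $w$. Both are doubly nonnegative and reproduce all edge entries of $G_1$ and $G_2$. They satisfy $\ones^{\top}M_1\ones+\ones^{\top}M_2\ones=\ones^{\top}M\ones$ exactly, since $\sum_u x_u$ splits into the orthogonal pieces $\sum_{U}x_u+p_U$, $\sum_{W}x_u+p_W$ and $p_0$. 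Your Cauchy--Schwarz step then closes. Carrying the unfolded $M$ through the induction is precisely what blocks this, because positive entries between $U$ and $W$ destroy the orthogonality.

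In the $2$-connected case your averaged bound $4S^2\le(q-1)T+\frac{q-1}{n-2}\tr M$, with $T:=\ones^{\top}M\ones$, is correct, but the closing step is missing. The tool you suggest, $M_{uv}\le\sqrt{M_{uu}M_{vv}}$, controls $S$ by the diagonal, which is useless exactly when the diagonal carries most of the mass. What works is the flat Cauchy--Schwarz bound $4S^2\le4mw$, where $w:=\sum_{uv\in E}M_{uv}$, plus a case split. If $2(n-1)w\le q\tr M$, then $4mw=2qw+2(n-1)w\le q(\tr M+2w)\le qT$. Otherwise set $\beta:=m-n+1$. Then $q-1=(n-2)+2\beta$, and by simplicity $2\beta\le(n-1)(n-2)$, so $2\beta(n-1)\tr M\le q(n-2)\tr M<2(n-1)(n-2)w$. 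Hence $\beta\tr M<(n-2)w$, which gives $\frac{q-1}{n-2}\tr M\le\tr M+2w\le T$, and the averaged bound yields $4S^2\le qT$. Finally, your $K_2$ base case is miscomputed: $q(K_2)=1$, not $2$. So one needs $2M_{uv}\le M_{uu}+M_{vv}$, which genuinely uses $M\succeq0$.
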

\begin{proof}
For any pair $(G,M)$ as in the theorem, write
\[
    S(G,M):=\sum_{uv\in E(G)}\sqrt{M_{uv}},
    \qquad
    T(M):=\ones^{\top}M\ones,
\]
so that the claim reads $4S(G,M)^2\le q(G)\,T(M)$.  We induct on
$n:=|V(G)|$.  If $n=1$, the result follows from $\ones^{\top}M\ones \ge 0$. If $n=2$, then $G=K_2$,
$q(K_2)=1$, and $M=\begin{psmallmatrix}a&c\\c&b\end{psmallmatrix}$ with
$c\ge0$; positive semidefiniteness gives
$c\le\sqrt{ab}\le\tfrac{a+b}2$, so
$4S(G,M)^2=4c\le a+b+2c=T(M)$.  Now assume $n\ge3$ and that the theorem holds for all connected graphs on at most $n-1$ vertices.

We proceed in three steps: first we fold all non-edge entries of $M$ onto the diagonal, so that we may assume $M_{uv}=0$ whenever $u\ne v$ and $uv\notin E(G)$; then we treat the case where $G$ has a cut vertex by splitting $G$ into two smaller induced subgraphs; finally we handle the case where $G-v$ is connected for every vertex $v$ by averaging the induction hypothesis over all vertex deletions.

\medskip
\noindent\textbf{Folding non-edges.}
Set
\[
    N:=M+\sum_{\{u,v\}\in \binom{V(G)}{2}\setminus E(G)}
    M_{uv}\,(e_u-e_v)(e_u-e_v)^{\top},
\]
where each unordered non-edge \(\{u,v\}\) is included exactly once. Each added term is positive semidefinite, cancels the corresponding non-edge entry of
$M$, and moves its mass to the diagonal; hence $N\succeq0$, $N\ge0$,
$N_{uv}=0$ for non-edges $u\ne v$, and $N_{uv}=M_{uv}$ on edges, so
the edge sums of $M$ and $N$ agree.  Moreover
$\ones^{\top}(e_u-e_v)(e_u-e_v)^{\top}\ones=(1-1)^2=0$, so
$\ones^{\top}N\ones=\ones^{\top}M\ones$. Therefore, without loss of generality, we assume $M_{uv} = 0$ for non-edges $u\ne v$.

\medskip
\noindent\textbf{Cut-vertex case.}
Suppose $G-v$ is disconnected for some $v$.  Group its connected components into
two nonempty unions $U,W$ and set
\[
    G_1:=G[U\cup\{v\}],
    \qquad
    G_2:=G[W\cup\{v\}].
\]
Both are connected (each component of $G-v$ sends an edge to $v$), they
share only $v$, their edge sets partition $E(G)$, and counting vertices
and edges gives
\[
    q(G_1)+q(G_2)=q(G).
\]

Since $M\succeq0$, it admits a Gram representation: there are vectors
$z_u$, $u\in V(G)$, in a Euclidean space, with norm $\norm{\cdot}$,
such that $M_{uv}=\ip{z_u}{z_v}$ for all $u,v\in V(G)$.  No edge joins
$U$ and $W$, so the support hypothesis makes the spans
$\mathcal U:=\operatorname{span}\{z_u:u\in U\}$ and
$\mathcal W:=\operatorname{span}\{z_w:w\in W\}$ orthogonal.  Consider the orthogonal decomposition of $z_v$
\[
    z_v=p_U+p_W+p_0,
    \qquad
    p_U\in\mathcal U,\quad
    p_W\in\mathcal W,\quad
    p_0\perp\mathcal U+\mathcal W.
\]
Let $M_1$ be the Gram matrix of $(z_u)_{u\in U}$ together with $p_U+p_0$
at $v$, and $M_2$ that of $(z_w)_{w\in W}$ together with $p_W$ at $v$.
Every off-diagonal entry of $M_1$ or $M_2$ involving $v$ agrees with
the corresponding entry of $M$: for $u\in U$,
$\ip{z_u}{p_U+p_0}=\ip{z_u}{z_v}$ because $z_u\perp p_W,p_0$, and
likewise on the other side.  As all their other entries are entries of
$M$ or squared norms, $M_1$ and $M_2$ are entrywise nonnegative; being
Gram matrices, they are also positive semidefinite.  Thus both pairs
$(G_1,M_1)$ and $(G_2,M_2)$ satisfy the hypotheses of the theorem, and
the edge sums split,
\[
    S(G,M)=S(G_1,M_1)+S(G_2,M_2).
\]
Moreover $T(M)=\bigl\lVert\sum_{u\in V(G)}z_u\bigr\rVert^2$ splits into
the orthogonal parts $\sum_{u\in U}z_u+p_U$, $\sum_{w\in W}z_w+p_W$,
and $p_0$, so
\[
    T(M)=T(M_1)+T(M_2).
\]
By induction, $2S(G_i,M_i)\le\sqrt{q(G_i)\,T(M_i)}$, so the scalar
Cauchy--Schwarz inequality gives
\[
    2S(G,M)\le\sqrt{q(G_1)T(M_1)}+\sqrt{q(G_2)T(M_2)}
    \le\sqrt{q(G)\,T(M)}.
\]

\medskip
\noindent\textbf{No-cut-vertex case.}
Now $G-v$ is connected for every $v$, so the induction hypothesis can
be averaged over all vertex deletions.  Write $m:=|E(G)|$, $q:=q(G)$,
and split $T(M)$ into diagonal and edge mass,
\[
    T(M)=d_0+2w,
    \qquad
    d_0:=\tr M,
    \quad
    w:=\sum_{uv\in E(G)}M_{uv}.
\]
Deleting a vertex $v$ removes from $S(G,M)$ the terms of the edges at
$v$:
\[
    S(G-v,M-v)=S(G,M)-\sigma_v,
    \qquad
    \sigma_v:=\sum_{u\sim v}\sqrt{M_{uv}},
\]
where $M-v$ denotes the submatrix of $M$ obtained by deleting the row and column corresponding to $v$.  Also, $G-v$ has $n-1$
vertices and $m-d(v)$ edges, $d(v)$ being the degree of $v$, so
$q(G-v)=q+1-2d(v)$.  The induction hypothesis for the pair
$(G-v,M-v)$ therefore reads
\[
    2\bigl(S(G,M)-\sigma_v\bigr)\le\sqrt{q(G-v)\,T(M-v)}.
\]
Sum over all $v$: every edge has two endpoints, so
$\sum_v\sigma_v=2S(G,M)$, and Cauchy--Schwarz gives
\[
    2(n-2)\,S(G,M)
    \le\sum_v\sqrt{q(G-v)\,T(M-v)}
    \le\sqrt{\Bigl(\sum_v q(G-v)\Bigr)\Bigl(\sum_v T(M-v)\Bigr)}.
\]
The two sums are elementary counts.  Using $\sum_v d(v)=2m$ and
$4m=2q+2n-2$, we have
\[
    \sum_v q(G-v)= \sum_v (q + 1 - 2d(v)) = n(q+1)-4m=(n-2)(q-1).
\]
Since each diagonal entry of $M$ survives in $n-1$ of the
submatrices $M-v$ while each edge entry survives in $n-2$ of them, we also have
\[
    \sum_v T(M-v)=(n-1)d_0+2(n-2)w=(n-2)\,T(M)+d_0.
\]
Combining the last three displays and dividing by $(n-2)^2$ yields the
\emph{averaged estimate}
\begin{equation}\label{eq:avg}
    4S(G,M)^2\le(q-1)\,T(M)+\frac{q-1}{n-2}\,d_0.
\end{equation}
Cauchy--Schwarz over the $m$ edges gives a second, \emph{flat},
estimate:
\begin{equation}\label{eq:edgeCS}
    4S(G,M)^2\le4m\sum_{uv\in E(G)}M_{uv}=4mw.
\end{equation}
Now we consider the following two cases, depending on the relative sizes of $w$ and $d_0$.
\ppar{Case 1} If $2(n-1)w\le qd_0$, then $4m=2q+2(n-1)$ turns \eqref{eq:edgeCS} into
\[
    4S(G,M)^2\le4mw=2qw+2(n-1)w\le2qw+qd_0=q\,T(M).
\]
\ppar{Case 2} If instead $2(n-1)w>qd_0$, we use the averaged
estimate \eqref{eq:avg}, for which it suffices to show that
$\frac{q-1}{n-2}\,d_0\le T(M)$.  Set $\beta:=m-n+1$, which is
nonnegative because $G$ is connected.  Since
$q-1=(n-2)+2\beta$ and $T(M)=d_0+2w$, this reduces to
\[
    \beta\,d_0\le(n-2)\,w.
\]
Since $G$ is simple, $2\beta=2m-2n+2\le(n-1)(n-2)$, so
$q(n-2)-2\beta(n-1)=(n-1)(n-2)-2\beta\ge0$.  Multiplying the case
hypothesis $qd_0<2(n-1)w$ by $n-2$,
\[
    2\beta(n-1)\,d_0\le q(n-2)\,d_0<2(n-1)(n-2)\,w,
\]
and dividing by $2(n-1)$ gives $\beta d_0<(n-2)w$, as needed.
Then \eqref{eq:avg} yields
\[
    4S(G,M)^2\le(q-1)\,T(M)+T(M)=q\,T(M),
\]
completing the induction.
\end{proof}

\section{Proof of the main theorem}\label{sec:main}
In this section, we directly deduce Theorem~\ref{thm:main} from Theorem~\ref{thm:sparse}. 

\begin{proof}[Proof of Theorem~\ref{thm:main}]
The case $n=1$ is trivial, so let $n\ge2$; then $G$ has an edge, and
since $\tr A=0$, the matrix $A$ has eigenvalues of both signs, so
$s^+(G)>0$ and $s^-(G)>0$.

Take $M:=A_+\circ A_+$, which satisfies $M\ge0$ and, by the Schur
product theorem, $M\succeq0$.  Its entries are $M_{uv}=(A_+)_{uv}^2$,
so, using $A_+A_-=0$,
\[
    \sum_{uv\in E(G)}\sqrt{M_{uv}}
    =\sum_{uv\in E(G)}\bigl|(A_+)_{uv}\bigr|
    \ge\sum_{uv\in E(G)}(A_+)_{uv}
    =\tfrac12\ip A{A_+}
    =\tfrac12\tr(A_+^2)
    =\tfrac12\,s^+(G),
\]
while
\[
    \ones^{\top}M\ones=\sum_{u,v}(A_+)_{uv}^2=\norm{A_+}_F^2
    =s^+(G).
\]
Theorem~\ref{thm:sparse} therefore gives
\[
    s^+(G)^2
    \le4\Bigl(\sum_{uv\in E(G)}\sqrt{M_{uv}}\Bigr)^{\!2}
    \le q(G)\,s^+(G),
\]
and dividing by $s^+(G)>0$ yields $s^+(G)\le q(G)$.

For the negative part, take $M:=A_-\circ A_-$. Now
$\ip A{A_-}=-\tr(A_-^2)=-s^-(G)$, so, using $|x|\ge-x$,
\[
    \sum_{uv\in E(G)}\sqrt{M_{uv}}
    =\sum_{uv\in E(G)}\bigl|(A_-)_{uv}\bigr|
    \ge-\sum_{uv\in E(G)}(A_-)_{uv}
    =\tfrac12\,s^-(G),
\]
and $\ones^{\top}M\ones=\norm{A_-}_F^2=s^-(G)$ as before, whence
$s^-(G)\le q(G)$.  By~\eqref{eq:trace},
\[
    s^\pm(G)=2m-s^\mp(G)\ge2m-q(G)=n-1.
    \qedhere
\]
\end{proof}

\section{Consequences of the main theorem}\label{sec:consequences}

We record several consequences of Theorem~\ref{thm:main}.  Throughout this
section, $\kappa(G)$ denotes the number of connected components of $G$, and
\[
    c(G):=|E(G)|-|V(G)|+\kappa(G)
\]
denotes its cyclomatic number.

\subsection{Disconnected graphs}
The following corollary proves the disconnected version of the square-energy
conjecture~\cite[Conjecture~2]{EFGW}.

\begin{corollary}\label{cor:components}
Let $G$ be a graph on $n$ vertices with $\kappa(G)$ connected components.
Then
\[
    \min\{s^+(G),s^-(G)\}\ge n-\kappa(G).
\]
\end{corollary}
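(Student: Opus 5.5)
The plan is to reduce to Theorem~\ref{thm:main} componentwise, using the fact that the spectrum of a disjoint union is the multiset union of the spectra of its components. Let $G_1,\dots,G_k$ be the connected components of $G$, with $k=\kappa(G)$ and $n_i:=|V(G_i)|$. Order the vertices so that $A(G)=A(G_1)\oplus\cdots\oplus A(G_k)$ is block diagonal. Then the eigenvalues of $A(G)$, counted with multiplicity, are exactly the union of the eigenvalues of the blocks. Summing the squares of the positive eigenvalues, and separately of the negative ones, gives
\[
    s^+(G)=\sum_{i=1}^k s^+(G_i),
    \qquad
    s^-(G)=\sum_{i=1}^k s^-(G_i).
\]
Equivalently, $A_\pm(G)=\bigoplus_i A_\pm(G_i)$, since the blockwise spectral decompositions assemble into one that satisfies $A_+A_-=0$.

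Each component $G_i$ is connected on $n_i$ vertices, so Theorem~\ref{thm:main} gives $s^\pm(G_i)\ge n_i-1$. For an isolated vertex the bound reads $0\ge0$, which is trivially true. Summing over $i$ yields
\[
    s^\pm(G)\ge\sum_{i=1}^k(n_i-1)=n-\kappa(G),
\]
which is the claim.

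The only step needing any care is the additivity of $s^\pm$ over components. It is immediate from the block-diagonal structure, so I do not expect a genuine obstacle; all the substantive work is already contained in Theorem~\ref{thm:main}.
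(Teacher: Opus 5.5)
Your proposal is correct and matches the paper's proof: $s^\pm$ is additive over the blocks of the block-diagonal adjacency matrix, Theorem~\ref{thm:main} bounds each component's contribution below by $n_i-1$, and summing over components gives $n-\kappa(G)$. Your separate check for isolated vertices is harmless but not needed, since Theorem~\ref{thm:main} already covers $n=1$.
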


\begin{proof}
Let $G_1,\ldots,G_{\kappa(G)}$ be the connected components of $G$, and set
$n_i:=|V(G_i)|$.  Since the adjacency matrix of $G$ is block diagonal,
$s^+$ and $s^-$ are additive over connected components.  Therefore
Theorem~\ref{thm:main} gives, for $\sigma\in\{+,-\}$,
\[
    s^\sigma(G)
    =\sum_{i=1}^{\kappa(G)}s^\sigma(G_i)
    \ge\sum_{i=1}^{\kappa(G)}(n_i-1)
    =n-\kappa(G).
\qedhere\]
\end{proof}

\subsection{Asymmetry of the square energies}

Elphick and Linz~\cite{EL} call $s^+(G)-s^-(G)$ the \emph{squared spread} of
$G$.  Combining Corollary~\ref{cor:components} with~\eqref{eq:trace}
gives, for either sign $\sigma$,
\[
    n-\kappa(G)\le s^\sigma(G)\le 2m-n+\kappa(G),
    \qquad\text{that is,}\qquad
    |s^\sigma(G)-m|\le c(G).
\]
It follows that
\begin{equation}\label{eq:square-spread-ineq2}
    |s^+(G)-s^-(G)|\le 2c(G).
\end{equation}
The next result strengthens and makes unconditional the upper bound in
\cite[Proposition~4.1]{EL}.

\begin{corollary}\label{cor:absolute-spread}
Every graph $G$ on $n$ vertices satisfies
\[
    |s^+(G)-s^-(G)|\le (n-1)(n-2).
\]
For $n\ge3$, equality holds if and only if $G\cong K_n$.
\end{corollary}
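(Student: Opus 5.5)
We will derive the bound from \eqref{eq:square-spread-ineq2}, which already gives $|s^+(G)-s^-(G)|\le 2c(G)$ for every graph, and then bound the cyclomatic number purely combinatorially by $c(G)\le\binom{n-1}{2}=\tfrac{(n-1)(n-2)}2$. No further spectral input will be needed except for checking the equality case $K_n$ directly.

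For the combinatorial step, let $G_1,\dots,G_{\kappa}$ be the components, with $n_i$ vertices and $m_i$ edges. Then $c(G)=\sum_i(m_i-n_i+1)$. Since $G_i$ is simple, $m_i\le\binom{n_i}{2}$, and hence $c(G)\le\sum_i f(n_i)$ with $f(k):=\tfrac{(k-1)(k-2)}2$. A direct computation gives
\[
f(a+b)-f(a)-f(b)=ab-1\ge0
\]
for all positive integers $a,b$, with equality only when $a=b=1$. Iterating this merges the components, so $\sum_i f(n_i)\le f(n)$. This yields $|s^+-s^-|\le 2c(G)\le(n-1)(n-2)$.

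For the equality statement with $n\ge3$, we trace where equality is forced. First, equality requires $m_i=\binom{n_i}{2}$ for every $i$, so each component is complete. Second, it requires equality in every merging step. If $\kappa\ge2$, then either some component has at least $2$ vertices, so a merge with $ab\ge2$ is strict, or all components are singletons, in which case $c(G)=0<f(n)$ because $n\ge3$. Hence $\kappa=1$ and $G\cong K_n$. Conversely, $K_n$ has spectrum $n-1$ (once) and $-1$ (with multiplicity $n-1$). Thus $s^+=(n-1)^2$ and $s^-=n-1$, whose difference is exactly $(n-1)(n-2)$.

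The only mildly delicate point is bookkeeping the strictness in the merging argument, so that disconnected graphs with complete components, including the edgeless graph, are excluded when $n\ge3$. The identity $f(a+b)-f(a)-f(b)=ab-1$ handles this cleanly.
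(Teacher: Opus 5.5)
Your proof is correct and follows essentially the same route as the paper: you combine $|s^+(G)-s^-(G)|\le 2c(G)$ from \eqref{eq:square-spread-ineq2} with a purely combinatorial bound $c(G)\le\binom{n-1}{2}$, then trace equality and check $K_n$ directly. The only difference is bookkeeping: the paper goes through $|E(G)|\le\binom{n-\kappa+1}{2}$ and the intermediate bound $c(G)\le\binom{n-\kappa}{2}$, whereas your identity $f(a+b)-f(a)-f(b)=ab-1$ proves the merging step directly and makes the strictness in the equality case explicit.
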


\begin{proof}
Let $\kappa:=\kappa(G)$.  If the connected components of $G$ have orders
$n_1,\ldots,n_\kappa$, then
\[
    |E(G)|
    \le \sum_{i=1}^{\kappa}\binom{n_i}{2}
    \le \binom{n-\kappa+1}{2}.
\]
The second inequality is attained only when one component has
$n-\kappa+1$ vertices and all the remaining components are isolated
vertices.  It follows that
\[
    c(G)
    =|E(G)|-n+\kappa
    \le \binom{n-\kappa}{2}
    \le \binom{n-1}{2}.
\]
By \eqref{eq:square-spread-ineq2}, we obtain
\[
    |s^+(G)-s^-(G)|
    \le 2c(G)\le 2\binom{n-1}{2}
    =(n-1)(n-2).
\]

Suppose that $n\ge3$ and equality holds.  Then equality must hold in
\[
    c(G)\le\binom{n-\kappa}{2}\le\binom{n-1}{2},
\]
which forces $\kappa=1$ and $|E(G)|=\binom n2$.  Hence $G\cong K_n$.
Conversely, the adjacency eigenvalues of $K_n$ are $n-1$ and $-1$ with
multiplicity $n-1$, so
\[
    s^+(K_n)-s^-(K_n)
    =(n-1)^2-(n-1)
    =(n-1)(n-2).
\qedhere\]
\end{proof}

\subsection{Adjacency inertia}

For a graph $G$, let $n^+(G)$, $n^0(G)$, and $n^-(G)$ denote,
respectively, the numbers of positive, zero, and negative eigenvalues of its
adjacency matrix, counted with multiplicity.  Let $\iota(G)$ denote the
number of isolated vertices.

Zhang~\cite[Corollary~1.5]{Zh} proved that every graph without isolated
vertices satisfies
\[
    \min\{s^+(G),s^-(G)\}
    \ge
    \max\{n^+(G),n^0(G),n^-(G)\}.
\]
The following result proves the stronger conjecture of Elphick recorded
in~\cite[Conjecture~7.4]{Zh} and extends it to
graphs with isolated vertices.

\begin{corollary}\label{cor:inertia}
Every graph $G$ satisfies
\[
    \min\{s^+(G),s^-(G)\}
    \ge
    n^0(G)-\iota(G)+\max\{n^+(G),n^-(G)\}.
\]
\end{corollary}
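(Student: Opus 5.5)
We need to show $\min\{s^+,s^-\}\ge n^0-\iota+\max\{n^+,n^-\}$. The plan is to reduce to graphs without isolated vertices and then combine Corollary~\ref{cor:components} with a component-wise comparison of $n-\kappa$ against inertia counts.

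\textbf{Removing isolated vertices.} Each isolated vertex contributes one zero eigenvalue and nothing to $s^\pm$. So deleting all of them decreases $n^0$ by $\iota$ and leaves $s^\pm$, $n^\pm$ unchanged. It therefore suffices to prove
\[
\min\{s^+,s^-\}\ge n^0+\max\{n^+,n^-\}
\]
for graphs with no isolated vertices. Since $n=n^++n^0+n^-$, the right-hand side equals $n-\min\{n^+,n^-\}$.

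\textbf{Additivity.} The quantities $s^\pm$, $n^\pm$ and $n^0$ are all additive over components, and $n-\min\{n^+,n^-\}\le\sum_i\bigl(n_i-\min\{n^+(G_i),n^-(G_i)\}\bigr)$ because $\min$ is superadditive. Hence it suffices to treat a connected graph $G$ with $n\ge2$ and prove
\[
\min\{s^+,s^-\}\ge n-\min\{n^+,n^-\}.
\]

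\textbf{The connected case.} By Theorem~\ref{thm:main}, $\min\{s^+,s^-\}\ge n-1$. A connected graph with at least one edge has $\operatorname{tr}A=0$ and $A\neq0$, so it has at least one positive and at least one negative eigenvalue. Thus $\min\{n^+,n^-\}\ge1$, which gives $n-\min\{n^+,n^-\}\le n-1$, and we are done.

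The main point requiring care is the superadditivity step, which needs every component to have at least one edge. This is exactly why isolated vertices are removed first: an isolated vertex has $n^\pm=0$, and $n_i-0=1>0=n_i-1$, so keeping it would break the bound. That loss is precisely what the $-\iota$ term accounts for.
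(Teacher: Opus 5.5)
Your proof is correct and is essentially the paper's argument. Both rest on applying Theorem~\ref{thm:main} to each component and on the fact that every component with an edge has eigenvalues of both signs; the paper packages this as Corollary~\ref{cor:components} plus the global bound $\min\{n^+(G),n^-(G)\}\ge\kappa(G)-\iota(G)$, while you discard isolated vertices and sum per-component inequalities. One small omission: the summation also needs $\min\{s^+(G),s^-(G)\}\ge\sum_i\min\{s^+(G_i),s^-(G_i)\}$, which is superadditivity of $\min$ on the left-hand side, and you should state it next to the right-hand-side inequality you did write.
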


\begin{proof}
Let $\eta:=\kappa(G)-\iota(G)$ denote the number of connected components of $G$ with at least two vertices.  The adjacency matrix
of any such component has at least one positive and at least one negative eigenvalue.
Hence
\[
    \min\{n^+(G),n^-(G)\}\ge \eta.
\]
Since $n=n^+(G)+n^0(G)+n^-(G)$, we obtain
\[
\begin{aligned}
    n^0(G)-\iota(G)+\max\{n^+(G),n^-(G)\}
    &=n-\iota(G)-\min\{n^+(G),n^-(G)\}\\
    &\le n-\iota(G)-\eta\\
    &=n-\kappa(G).
\end{aligned}
\]
The result follows from Corollary~\ref{cor:components}.
\end{proof}

\subsection{Positive and negative \texorpdfstring{$p$}{p}-energies}
For $p>0$, the corresponding positive and negative $p$-energies are
defined by
\[
    \mathcal E_p^+(G)
    :=\sum_{\lambda_i>0}|\lambda_i|^p,
    \qquad
    \mathcal E_p^-(G)
    :=\sum_{\lambda_i<0}|\lambda_i|^p.
\]
These quantities have also been studied in several recent
papers~\cite{AKMPp,CWZ,ETZ,LTPath,TLW}. Our main theorem resolves a number of conjectures surrounding these energies.

For connected graphs, the next result proves
\cite[Conjecture~5.4]{ETZ}.  Its negative-sign case for $p\ge2$ also proves
\cite[Conjecture~2]{AKMPp}.

\begin{corollary}\label{cor:p-energy}
Let $G$ be a graph on $n$ vertices. For every $p>0$ and $\sigma\in\{+,-\}$,
\[
    \mathcal E_p^\sigma(G)\ge
    \begin{cases}
        \left(n-\kappa(G)\right)^{p/2},&0<p\le2,\\[2mm]
        n-\kappa(G),&p\ge2.
    \end{cases}
\]
\end{corollary}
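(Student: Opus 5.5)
The plan is to reduce everything to Corollary~\ref{cor:components}, which gives $s^\sigma(G)=\mathcal E_2^\sigma(G)\ge n-\kappa(G)$ for both signs, and then compare $\ell_p$ and $\ell_2$ norms of the relevant eigenvalue multiset. Fix $\sigma$ and let $\mu_1,\dots,\mu_k>0$ be the absolute values of the eigenvalues of sign $\sigma$. Write $s:=\sum_i\mu_i^2=s^\sigma(G)$ and $N:=n-\kappa(G)$. If $N=0$, then $G$ has no edges and the bound is trivial, so assume $N\ge1$. Then $s\ge N\ge1$.

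\emph{Case $0<p\le 2$.} The map $x\mapsto x^{p/2}$ is subadditive on $[0,\infty)$, since it is concave and vanishes at $0$. Hence
\[
    \mathcal E_p^\sigma(G)=\sum_i(\mu_i^2)^{p/2}\ge\Bigl(\sum_i\mu_i^2\Bigr)^{p/2}=s^{p/2}\ge N^{p/2}.
\]
The last step uses monotonicity of $x\mapsto x^{p/2}$ together with Corollary~\ref{cor:components}.

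\emph{Case $p\ge2$.} Here the inequality must be obtained differently, because $\ell_p$ norms are smaller than $\ell_2$ norms. The key additional input is that each nonzero eigenvalue of a graph has absolute value at least~$1$ on average in the right sense. What I would use is that every nonzero eigenvalue of sign $\sigma$ satisfies $\mu_i\ge1$ \emph{or} that the total is controlled anyway.

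Concretely, I would split $\sum_i\mu_i^p$ into two parts. For indices with $\mu_i\ge1$, we have $\mu_i^p\ge\mu_i^2$. For indices with $\mu_i<1$, the comparison goes the wrong way, and this is the main obstacle. One route is to use the product of all nonzero eigenvalues, which is a nonzero integer up to sign (it is a coefficient of the characteristic polynomial), but this controls the full product, not the signed part.

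A better route avoids individual eigenvalues entirely. By the variational characterization~\eqref{eq:variational}, extended to a $p$-version, or more simply by H\"older: $\sum_i\mu_i^2\le(\sum_i\mu_i^p)^{2/p}k^{1-2/p}$. This gives $\mathcal E_p^\sigma\ge s^{p/2}k^{1-p/2}$, which is weak when $k$ is large.

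So instead I would prove the statement by reapplying the machinery of Section~\ref{sec:main}. I would repeat that proof with $A_+$ replaced by $A_+^{(p)}:=f(A)$, the spectral function with eigenvalues $\lambda_i^{p-1}$ for $\lambda_i>0$ and $0$ otherwise. Take $M:=f(A)\circ f(A)$. Then $\tfrac12\ip A{f(A)}=\tfrac12\mathcal E_p^+$, and $\ones^\top M\ones=\norm{f(A)}_F^2=\mathcal E_{2p-2}^+$. Theorem~\ref{thm:sparse} then yields $(\mathcal E_p^+)^2\le q(G)\,\mathcal E_{2p-2}^+$ for connected $G$.

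Combining this with the trace identity $\mathcal E_p^+ +\ldots$ is not linear, so instead I expect the cleanest finish to be interpolation. The upper bound $s^{\mp}\le 2m-N$ from Section~\ref{sec:main} controls the opposite sign. For $p\ge2$, I would then use $\mathcal E_p^\sigma\ge$ (by convexity in $p$ of $\log\mathcal E_p^\sigma$ and the data at $p=2$, together with a suitable lower bound at a larger exponent such as $\mathcal E_\infty^\sigma=\max\mu_i\ge1$, which holds since a graph with an edge has spectral radius $\ge1$ and least eigenvalue $\le-1$).

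Verifying that log-convexity between $p=2$ and the large-$p$ anchor gives exactly $\ge N$ for all $p\ge2$ is the step I expect to need the most care, and I would work it out component by component, using additivity over components as in Corollary~\ref{cor:components}.
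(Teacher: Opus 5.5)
For $0<p\le2$ your subadditivity argument is correct and is exactly the paper's. For $p\ge2$, however, the proposal contains no proof, and the missing idea is a simple eigenvalue count.

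You already derived the right inequality, $\mathcal E_p^\sigma(G)\ge k^{1-p/2}s^{p/2}$ from H\"older, but dismissed it as weak when $k$ is large. In fact $k\le n-\kappa(G)=N$:
\begin{itemize}
\item every component with an edge has a nonzero adjacency matrix of trace $0$, hence at least one eigenvalue of sign opposite to $\sigma$;
\item every isolated vertex contributes an eigenvalue $0$.
\end{itemize}
So at least $\kappa(G)$ of the $n$ eigenvalues are not of sign $\sigma$. Since $1-p/2\le0$, this gives $k^{1-p/2}\ge N^{1-p/2}$. Together with $s\ge N$ from Corollary~\ref{cor:components}, you get $\mathcal E_p^\sigma(G)\ge N^{1-p/2}N^{p/2}=N$, which is how the paper concludes.

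The substitutes you propose would not work.
\begin{itemize}
\item Inserting $f(A)$ into Theorem~\ref{thm:sparse} gives $(\mathcal E_p^+)^2\le q(G)\,\mathcal E_{2p-2}^+$. This only transports a lower bound from exponent $p$ to exponent $2p-2$. Since $p\mapsto2p-2$ fixes $2$, the one exponent where you have information, it never produces anything new; at $p=2$ it merely restates $s^+\le q(G)$.
\item Log-convexity of $p\mapsto\log\mathcal E_p^\sigma$ bounds intermediate values from above by the chord, not from below. The anchor $\max_i\mu_i\ge1$ only says the limiting slope $\log\max_i\mu_i$ is nonnegative. That does not stop $\mathcal E_p^\sigma$ from decreasing on all of $[2,\infty)$.
\end{itemize}
More decisively, the data $s\ge N$ and $\max_i\mu_i\ge1$ alone cannot imply the bound. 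Take the multiset consisting of $1$ and $k-1$ copies of some $\epsilon<1$ with $(k-1)\epsilon^2=N-1$. It satisfies both conditions, yet $\sum_i\mu_i^p=1+(N-1)\epsilon^{p-2}\to1<N$ as $p\to\infty$ whenever $N\ge2$. What excludes such configurations is precisely $k\le N$.

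Your aside that nonzero eigenvalues have absolute value at least $1$ is also false: $P_4$ has eigenvalues $\pm(\sqrt5-1)/2$.
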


\begin{proof}
Set $N:=n-\kappa(G)$. The case $N=0$ is immediate.  Suppose that $N>0$, fix
$\sigma\in\{+,-\}$, and let $x_1,\ldots,x_r$ be the absolute values of the
adjacency eigenvalues of sign $\sigma$.  By
Corollary~\ref{cor:components},
\[
    \sum_{i=1}^r x_i^2=s^\sigma(G)\ge N.
\]
If $0<p\le2$, then
\[
    \mathcal E_p^\sigma(G)
    =\sum_{i=1}^r x_i^p
    \ge
    \left(\sum_{i=1}^r x_i^2\right)^{p/2}
    \ge N^{p/2}.
\]
Now let $p\ge2$.  By H\"older's inequality,
\[
    \mathcal E_p^\sigma(G)
    \ge
    r^{1-p/2}
    \left(\sum_{i=1}^r x_i^2\right)^{p/2}.
\]
Every nontrivial connected component of $G$ contributes an adjacency
eigenvalue of sign opposite to $\sigma$, while every isolated vertex
contributes a zero eigenvalue.  Thus at least $\kappa(G)$ eigenvalues are
not among $x_1,\ldots,x_r$, and hence $r\le n-\kappa(G)=N$. Since $1-p/2\le0$, it follows that
\[
    \mathcal E_p^\sigma(G)
    \ge
    N^{1-p/2}N^{p/2}
    =N.
\qedhere\]
\end{proof}

For a connected graph, Corollary~\ref{cor:p-energy} gives
\[
    \min\{\mathcal E_p^+(G),\mathcal E_p^-(G)\}
    \ge(n-1)^{p/2}
    \qquad (0<p\le2).
\]
This proves~\cite[Conjecture~5.4]{ETZ}; the range $0<p<1$ was previously
proved in~\cite[Theorem~5.7]{ETZ}. With the convention $\mathcal E_0^\pm(G):=n^\pm(G)$, the corresponding assertion at $p=0$ is immediate.

If $G$ is connected and $p\ge2$, then the negative-sign bound gives
\[
    \mathcal E_p^-(G)\ge n-1=\mathcal E_p^-(K_n).
\]
This proves~\cite[Conjecture~2]{AKMPp}.  The conjecture was previously known
for all real $p\ge4$ by~\cite{AKMPp} and for every integer $p\ge3$
by~\cite{CWZ}.

The bounds in Corollary~\ref{cor:p-energy} are sharp.  For $0<p<2$,
equality is attained by the disjoint union of a star and isolated vertices.
For $p=2$, equality holds for every forest.  For $p>2$, equality is attained
by a disjoint union of edges and isolated vertices.  In the connected
negative-sign case, equality is attained by $K_n$.

For connected graphs and $p>2$, Corollary~\ref{cor:p-energy} gives
\[
    \mathcal E_p^+(G)\ge n-1.
\]
The stronger conjecture
\[
    \mathcal E_p^+(G)\ge\mathcal E_p^+(P_n),
\]
where $P_n$ is the path on $n$ vertices, remains open in general.  It is
known for connected bipartite graphs for every real $p\ge2$, for all
connected graphs when $p\ge3$ is an odd integer, and for all connected
graphs when $p=4$; see~\cite{LTPath}.

\subsection{Graph complements}

Let $\overline G$ denote the complement of $G$.  Elphick and
Aouchiche~\cite[Conjecture~6]{EA} conjectured that
\[
    s^+(G)+s^+(\overline G)\le(n-1)^2
\]
and observed that this inequality would follow from
Conjecture~\ref{conj:EFGW}.  The following corollary proves their conjecture
and its negative-sign analogue.

\begin{corollary}\label{cor:nordhaus-gaddum}
Let $G$ be a graph on $n\ge2$ vertices.  For every
$\sigma\in\{+,-\}$,
\[
    s^\sigma(G)+s^\sigma(\overline G)\le(n-1)^2.
\]
For $\sigma=+$, equality holds if and only if $G$ is complete or edgeless.
For $\sigma=-$, equality holds if and only if $n=2$.
\end{corollary}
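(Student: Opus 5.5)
The plan is to apply Corollary~\ref{cor:components} to $G$ and to $\overline G$ separately, add the two bounds, and reduce everything to a purely combinatorial inequality on numbers of components. By \eqref{eq:trace} applied to each graph, Corollary~\ref{cor:components} gives the upper bound
\[
    s^\sigma(H)=2|E(H)|-s^{-\sigma}(H)\le 2|E(H)|-n+\kappa(H)
\]
for $H\in\{G,\overline G\}$. Since $|E(G)|+|E(\overline G)|=\binom n2$, adding the two bounds yields
\[
    s^\sigma(G)+s^\sigma(\overline G)\le n(n-1)-2n+\kappa(G)+\kappa(\overline G).
\]
So it suffices to show $\kappa(G)+\kappa(\overline G)\le n+1$, because then the right-hand side is at most $n^2-2n+1=(n-1)^2$.

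For the component count I will use the classical fact that if $G$ is disconnected then $\overline G$ is connected: any two vertices in different components of $G$ are adjacent in $\overline G$, and two vertices in the same component have a common $\overline G$-neighbour in another component. This leaves two cases.
\begin{itemize}
\item If both $G$ and $\overline G$ are connected, then $\kappa(G)+\kappa(\overline G)=2<n+1$, since $n\ge2$.
\item Otherwise one of them, say $G$ by symmetry, is disconnected. Then $\kappa(\overline G)=1$ and $\kappa(G)\le n$, so the sum is at most $n+1$. Equality forces $\kappa(G)=n$, that is, $G$ edgeless.
\end{itemize}
Hence $\kappa(G)+\kappa(\overline G)=n+1$ holds exactly when $G$ is complete or edgeless, which proves the inequality.

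For the equality cases, equality in the chain forces $\kappa(G)+\kappa(\overline G)=n+1$, so $\{G,\overline G\}=\{K_n,\overline{K_n}\}$. It remains to check these graphs directly using the spectrum of $K_n$, namely $n-1$ once and $-1$ with multiplicity $n-1$, together with $s^\pm(\overline{K_n})=0$.
\begin{itemize}
\item For $\sigma=+$ we get $s^+(K_n)+0=(n-1)^2$, so equality holds exactly for complete or edgeless $G$.
\item For $\sigma=-$ we get $s^-(K_n)+0=n-1$, which equals $(n-1)^2$ iff $n=2$. Conversely, when $n=2$ every graph is $K_2$ or $\overline{K_2}$, so equality always holds there.
\end{itemize}

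I do not expect a genuine obstacle, since the analytic content is entirely in Corollary~\ref{cor:components}. The only point needing care is the equality analysis: it relies on the fact that equality in the sum of the two bounds can only occur when the component-count inequality is tight, and that follows directly from the chain above.
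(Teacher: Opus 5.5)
Your proposal is correct and follows the paper's proof essentially step for step: you combine Corollary~\ref{cor:components} for $G$ and $\overline G$ with the trace identity to reduce to $\kappa(G)+\kappa(\overline G)\le n+1$, and you read off the equality cases from the spectra of $K_n$ and the edgeless graph. The only addition is that you give the short argument that the complement of a disconnected graph is connected, which the paper uses without proof.
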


\begin{proof}
Fix $\sigma\in\{+,-\}$, and let $\tau$ denote the opposite sign.  Write $\kappa:=\kappa(G)$ and $\overline\kappa:=\kappa(\overline G)$. By Corollary~\ref{cor:components},
\[
    s^\tau(G)+s^\tau(\overline G)
    \ge 2n-\kappa-\overline\kappa.
\]
Since
\[
    s^+(G)+s^-(G)+s^+(\overline G)+s^-(\overline G)
    =n(n-1),
\]
we obtain
\[
    s^\sigma(G)+s^\sigma(\overline G)
    \le n(n-1)-2n+\kappa+\overline\kappa.
\]
At least one of $G$ and $\overline G$ is connected, and hence $\kappa+\overline\kappa\le n+1$. Therefore
\[
    s^\sigma(G)+s^\sigma(\overline G)\le(n-1)^2.
\]

Suppose that equality holds. Then $\kappa+\overline\kappa=n+1$. Since at least one of $G$ and $\overline G$ is connected, the other has
$n$ connected components.  Thus one of the two graphs is complete and the
other is edgeless.  Now
\[
    s^+(K_n)=(n-1)^2,
    \qquad
    s^-(K_n)=n-1,
\]
whereas both square energies of an edgeless graph vanish.  The equality
statements follow.
\end{proof}

\section*{Acknowledgments and AI disclosure}
We used ChatGPT to generate exploratory code for testing conjectures and to help polish the language of proof drafts. All mathematical content was verified by the authors, and the proof of Theorem~\ref{thm:main} has additionally been formally verified in the Lean proof assistant; the formalization is available at \url{https://github.com/ShengtongZhang-alt/Sq}. The core ideas and the overall strategy of the proof were developed independently by the authors; the motivation and the path leading to the argument are described in the overview; any suggestions from the tool were modified or discarded as needed, and the authors take full responsibility for the correctness and originality of the paper.

Q.~Tang thanks Minghua Lin for helpful discussions.


\begin{thebibliography}{99}

\bibitem{AbiadEtAl}
A.~Abiad, L.~de~Lima, D.~N.~Desai, K.~Guo, L.~Hogben, and J.~Madrid,
\newblock Positive and negative square energies of graphs,
\newblock \emph{Electron. J. Linear Algebra} \textbf{39} (2023), 307--326.

\bibitem{AKMP}
S.~Akbari, H.~Kumar, B.~Mohar, and S.~Pragada,
\newblock A linear lower bound for the square energy of graphs,
\newblock \emph{Electron. J. Combin.} \textbf{32} (2025), no.~3,
Paper No.~P3.53.

\bibitem{AKMPp}
S.~Akbari, H.~Kumar, B.~Mohar, and S.~Pragada,
\newblock Vertex partitioning and $p$-energy of graphs,
\newblock \emph{Linear Algebra Appl.} \textbf{724} (2025), 96--107.

\bibitem{AKMPZ}
S.~Akbari, H.~Kumar, B.~Mohar, S.~Pragada, and S.~Zhang,
\newblock Refinement of a conjecture on positive square energy of graphs,
\newblock arXiv:2506.07264, 2025.

\bibitem{AL}
T.~Ando and M.~Lin,
\newblock Proof of a conjectured lower bound on the chromatic number of a
graph,
\newblock \emph{Linear Algebra Appl.} \textbf{485} (2015), 480--484.

\bibitem{BSM}
A.~Berman and N.~Shaked-Monderer,
\newblock \emph{Completely Positive Matrices},
\newblock World Scientific Publishing Co., River Edge, NJ, 2003.

\bibitem{CWZ}
Z.~Chen, Z.~Wang, and X.-D.~Zhang,
\newblock Positive and negative $3$-energies of graphs,
\newblock arXiv:2604.15656, 2026.

\bibitem{CS}
G.~Coutinho and T.~J.~Spier,
\newblock Sums of squares of eigenvalues and the vector chromatic number,
\newblock arXiv:2308.04475, 2023.

\bibitem{CSZ}
G.~Coutinho, T.~J.~Spier, and S.~Zhang,
\newblock Conic programming to understand sums of squares of eigenvalues of
graphs,
\newblock arXiv:2411.08184, 2024.

\bibitem{EA}
C.~Elphick and M.~Aouchiche,
\newblock Nordhaus--Gaddum and other bounds for the sum of squares of the
positive eigenvalues of a graph,
\newblock \emph{Linear Algebra Appl.} \textbf{530} (2017), 150--159.

\bibitem{EFGW}
C.~Elphick, M.~Farber, F.~Goldberg, and P.~Wocjan,
\newblock Conjectured bounds for the sum of squares of positive eigenvalues
of a graph,
\newblock \emph{Discrete Math.} \textbf{339} (2016), no.~9, 2215--2223.

\bibitem{EL}
C.~Elphick and W.~Linz,
\newblock Symmetry and asymmetry between positive and negative square
energies of graphs,
\newblock \emph{Electron. J. Linear Algebra} \textbf{40} (2024), 418--432.

\bibitem{ETZ}
C.~Elphick, Q.~Tang, and S.~Zhang,
\newblock A spectral lower bound on chromatic numbers using $p$-energy,
\newblock \emph{European J. Combin.} \textbf{132}, Part B (2026),
Article No.~104252.

\bibitem{Hong}
Y.~Hong,
\newblock A bound on the spectral radius of graphs,
\newblock \emph{Linear Algebra Appl.} \textbf{108} (1988), 135--139.

\bibitem{HJ}
R.~A.~Horn and C.~R.~Johnson,
\newblock \emph{Matrix Analysis},
\newblock 2nd ed., Cambridge University Press, Cambridge, 2013.

\bibitem{LSG}
X.~Li, Y.~Shi, and I.~Gutman,
\newblock \emph{Graph Energy},
\newblock Springer, New York, 2012.

\bibitem{LiuNing2023}
L.~Liu and B.~Ning,
\newblock Unsolved problems in spectral graph theory,
\newblock \emph{Oper. Res. Trans.} \textbf{27} (2023), no.~4, 33--60.

\bibitem{LTPath}
Y.~Liu and Q.~Tang,
\newblock Path-minimality for positive $p$-energies, Laplacian-type spectra, and line graphs,
\newblock arXiv:2606.30996v1, 2026.

\bibitem{LTZTuran}
Y.~Liu, Q.~Tang, and S.~Zhang,
\newblock A positive square-energy strengthening of Tur\'an's theorem,
\newblock preprint, 2026.

\bibitem{NZ}
B.~Ning and J.~Zeng,
\newblock A proof of a conjecture on positive and negative square energies
of unicyclic graphs,
\newblock arXiv:2605.24668, 2026.

\bibitem{TLW}
Q.~Tang, Y.~Liu, and W.~Wang,
\newblock On the positive and negative $p$-energies of graphs under edge
addition,
\newblock \emph{Discrete Appl. Math.} \textbf{388} (2026), 25--33.

\bibitem{VB}
L.~Vandenberghe and S.~Boyd,
\newblock Semidefinite programming,
\newblock \emph{SIAM Rev.} \textbf{38} (1996), no.~1, 49--95.

\bibitem{WE}
P.~Wocjan and C.~Elphick,
\newblock New spectral bounds on the chromatic number encompassing all
eigenvalues of the adjacency matrix,
\newblock \emph{Electron. J. Combin.} \textbf{20} (2013), no.~3,
Paper No.~P39.

\bibitem{Zh}
S.~Zhang,
\newblock Extremal values for the square energies of graphs,
\newblock arXiv:2409.15504v2, 2024.

\end{thebibliography}
\end{document}